\newtheorem{theorem}{Theorem}
\newtheorem{pro}[theorem]{Proposition}
\newtheorem{lemma}[theorem]{Lemma}
\newtheorem{clm}[theorem]{Claim}
\newtheorem{cor}[theorem]{Corollary}
\theoremstyle{definition}
\theoremstyle{remark}
\newtheorem{remark}[theorem]{Remark}
\numberwithin{equation}{section}
\newcommand{\del}{\partial}
\def\opp{\ensuremath{\mbox{\rm Opp}}}
\def\bddM{\ensuremath{38t(M)}}
\begin{document}

\title[Strong cylindricality and the monodromy of bundles]{Strong cylindricality and\\ the monodromy of bundles} 

\author{Kazuhiro Ichihara}
\address{Department of Mathematics, College of Humanities and Sciences, Nihon University, 3-25-40 Sakurajosui, Setagaya-ku, Tokyo 156-8550, Japan}
\email{ichihara@math.chs.nihon-u.ac.jp}
\thanks{The first author is supported by JSPS KAKENHI Grant Number 23740061.}

\author{Tsuyoshi Kobayashi}
\address{Department of Mathematics, Nara Women's University, Kitauoya Nishimachi, Nara 630-8506, Japan}
\email{tsuyoshi@cc.nara-wu.ac.jp}
\thanks{The second author  is supported by JSPS KAKENHI Grant Number 25400091.}

\author{Yo'av Rieck}
\address{Department of mathematical Sciences, University of Arkansas, Fayetteville, AR 72701}
\email{yoav@uark.edu}

\subjclass[2010]{57M99, 57R22}

\keywords{3-manifolds, fiber bundles, hyperbolic manifolds, translation distance}%

\date{\today}


\commby{Daniel Ruberman}

\begin{abstract}
A surface $F$ in a 3-manifold $M$ is called cylindrical if $M$ cut open along $F$ admits an essential annulus $A$. 
If, in addition, $(A, \partial A)$ is embedded in $(M, F)$, then we say that $F$ is strongly cylindrical.
Let $M$ be a connected 3-manifold that admits a triangulation using $t$ tetrahedra 
and $F$ a two-sided connected essential closed surface of genus $g(F)$.  
We show that if $g(F)$ is at least $38 t$, then $F$ is strongly cylindrical.  
As a corollary, we give an alternative proof of the assertion 
that every closed hyperbolic 3-manifold admits only finitely many fibrations over the circle with connected fiber 
whose translation distance is not one, which was originally proved by Saul Schleimer.
\end{abstract}

\maketitle

By {\it manifold} we mean connected 3-dimensional manifold; 
any manifold considered is assumed to be {\it tame}, 
that is, obtained from a compact manifold by removing finitely many compact connected sets from its boundary.
All surfaces considered are assumed to be closed and connected; 
every embedded surface is assumed to be two sided.  
We assume familiarity with the basic notions of 3-manifold topology
and in particular the basic notions of normal surface theory and fibered manifolds, 
where by {\it fibered manifold} we mean a manifold that fibers over $S^{1}$.  
A connected surface properly embedded in a manifold is called {\it essential} 
if it is incompressible, boundary incompressible, not boundary parallel, and not a ball-bounding sphere. 
A surface $F$ in a manifold $M$ is called {\it cylindrical} if $M$ cut open along $F$ admits an essential annulus $A$.
If, in addition, $(A,\del A)$ is embedded in $(M,F)$, then $F$ is called {\it strongly cylindrical}
(thus $F$ is strongly cylindrical if, after gluing $M$ cut open along $F$ to obtain $M$, the components of $\del A$ are disjoint).
Note that if $M$ fibers with fiber $F \not\cong S^{2}$ then $F$ is cylindrical but not necessarily strongly cylindrical.  
We use $g(F)$ to denote the genus of a surface $F$ and 
$t(M)$ to denote the minimal number of tetrahedra required to triangulate a 3-manifold $M$; 
we allow ideal and truncated vertices as described in~\cite{KobayashiRieck}.  
It is well known that every tame manifold admits such a triangulation.

Our goal is to provide a simple, self contained proof of the following:

\begin{theorem}
\label{thm:easy}
Let $M$ be a connected tame 3-manifold that admits a triangulation using $t(M)$ tetrahedra 
and $F \subset M$ a two sided connected essential closed surface.
If $g(F) \geq \bddM$, then $F$ is strongly cylindrical.
\end{theorem}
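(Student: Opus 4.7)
The plan is to use normal surface theory. Isotope $F$ to a normal surface with respect to the given triangulation of $M$, with $t = t(M)$ tetrahedra. Since $g(F) \geq \bddM$, we have $-\chi(F) \geq 76t - 2$, which forces $F$ to have very many normal disks spread over only $t$ tetrahedra. In each tetrahedron the normal disks fall into at most five parallelism classes: the four triangle types and at most one quadrilateral type (since distinct quadrilateral types in one tetrahedron would intersect). A pigeonhole argument then produces a long stack of mutually parallel normal disks in some tetrahedron.

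Next, I would promote this local parallelism to a global product. Two adjacent parallel normal disks in a tetrahedron cobound an $I$-bundle slab, and two such slabs in neighboring tetrahedra glue across a shared 2-face whenever their arcs of intersection match. Iterating and passing to a maximal connected such region, I obtain an embedded product $W \cong P \times [0,1]$ in $M$ with horizontal boundary $P \times \{0,1\} \subset F$. A careful counting (where the constant $38$ enters) should show that $P$ is large enough to contain an essential simple closed curve $\gamma$ that is not parallel to any component of $\del P$; degenerate alternatives (where $P$ is a disk, sphere, or an annulus parallel to $\del P$) each place a linear-in-$t$ upper bound on $g(F)$ that is beaten by $\bddM$.

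The annulus $A := \gamma \times [0,1]$ is then embedded in $W \subset M$ with $\del A \subset F$. A standard innermost-disk argument, using that $F$ is essential in $M$ and $\gamma$ is essential in $P$, shows that $A$ is essential in $M$ cut open along $F$: a compressing or boundary-compressing disk for $A$ would yield a compression of $F$ or a parallelism of $\gamma$ into $\del P$. Hence $F$ is cylindrical.

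The main subtlety, and what I expect to be the hardest step, is the upgrade to \emph{strong} cylindricality. This requires the two boundary curves $\gamma \times \{0\}$ and $\gamma \times \{1\}$ to be disjoint when viewed on $F$ after re-gluing the two sides, equivalently that $P \times \{0\}$ and $P \times \{1\}$ remain disjoint subsurfaces of $F$. The dangerous configuration is a ``mapping torus'' type scenario in which the product region $W$ loops back globally and $P \times \{1\}$ is identified with $P \times \{0\}$ in $F$ via the gluing, so that $\del A$ collapses to a pair of curves coming from $\gamma$ and its monodromy image which may intersect. Ruling this out should reduce to a bound forcing $g(F)$ to be at most linear in $t$ with a coefficient strictly less than $38$, contradicting the hypothesis and yielding strong cylindricality; balancing this calculation against the pigeonhole step in the second paragraph is where the specific constant $38$ in $\bddM$ should come from.
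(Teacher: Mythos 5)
Your plan diverges from the paper's argument at the crucial step, and the divergence is exactly where you yourself flag uncertainty: the upgrade to \emph{strong} cylindricality. You propose to build a \emph{maximal} connected product region $W \cong P \times [0,1]$ out of stacked normal-parallelism slabs and then to rule out the ``mapping torus'' wrap-around by a counting argument that would force $g(F)$ below $\bddM$. That last step is not merely hard to make precise --- it is not true as stated. If $M$ fibers with fiber $F$, then a maximal normal-parallelism product region will typically wrap all the way around regardless of how large $g(F)$ is; the genus hypothesis gives you no leverage against this. You would be left with $\del A$ consisting of $\gamma$ and its monodromy image, which can certainly intersect, and no bound on $t(M)$ rescues you. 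This is precisely the obstruction Schleimer's theorem is \emph{about}, so a scheme that collapses in the fibered case cannot yield Corollary~\ref{cor:fiber}.

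The paper avoids the wrap-around problem entirely by never building a long stack. It builds the cell decomposition of $F$ by vertex disks and truncated normal disks, and then $3$-colors the faces within each parallel family: the two outermost faces red, the interior faces alternately yellow and blue. A sequence of Euler-characteristic estimates (Lemmas~\ref{lem:R}--\ref{lem:F4}) shows that if $g(F)\geq\bddM$ then there is a pair of pants $X$ lying entirely in the interior of one color, say blue, with $\del X$ essential in $F$ (Proposition~\ref{pro:X}). The product region is then a \emph{single} $I$-slab: for $p\in X$, $I_p$ is the $I$-fiber of the parallelism inside one tetrahedron, pointing to the adjacent face in the family. Because interior colors alternate, the opposite face of a blue face is yellow or red, so $\opp(X)$ is automatically disjoint from $X$. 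This is the entire content of the disjointness you would need to prove by hand; the coloring makes it free. The resulting $I$-bundle $X\times I$ lies in $M$ cut open along $F$ with horizontal boundary $X\sqcup\opp(X)\subset F$, and the genus hypothesis is then used only to rule out the trivial case where all three vertical annuli $A_1,A_2,A_3$ are boundary parallel (which would force $g(F)=2$). The constant $38$ does not come from balancing a pigeonhole bound against a wrap-around bound; it comes from the ledger $\chi(R)\geq-(22t-1)$, $\chi(F_0)\leq-(54t-1)$, $\chi(F_1)\leq-(38t-1)$, which is what is needed for the cut-and-select process to terminate in a monochromatic pair of pants. So: your first three paragraphs are in the right spirit, but the fourth paragraph identifies a genuine gap in your own plan rather than sketching a fix, and the actual fix requires replacing the maximal-product idea with the one-step coloring idea.
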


For the next two corollaries 
suppose that $M$ fibers over $S^1$ with fiber $F$.  
Suppose in addition that $F$ is strongly cylindrical, and
let $(A,\del A)$ be an annulus embedded in $(M,F)$ as in the definition of strong cylindricality.  
We may identify $M$ cut open along $F$ with $F \times I$, where $I =[0,1]$.
It is easy to see $A \subset F \times I$ is isotopic to $\alpha \times I$,
where $\alpha$ is an essential curve of $F$.  Thus we see that  
$\alpha \times \{0\}$ is disjoint from $\alpha \times \{1\}$ 
in $M$ (perhaps after isotopy).
Since $\alpha \times \{1\}$ is the image of  $\alpha \times \{0\}$ under the monodromy
Theorem~\ref{thm:easy} implies:

\begin{cor}
\label{cor:fiber}
Let $M$ be a closed connected 3-manifold that fibers over $S^1$ with fiber $F$.
If $g(F) \geq \bddM$, then there is an essential simple closed curve
on $F$ that is disjoint from its image under the monodromy (perhaps after isotopy).
\end{cor}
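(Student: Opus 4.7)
The plan is to apply Theorem~\ref{thm:easy} and read off the corollary, essentially as sketched in the paragraph immediately preceding the statement. First I would check that the fiber $F$ meets the hypotheses of the theorem. Since $M \to S^{1}$ is a fibration with fiber $F$, the inclusion $F \hookrightarrow M$ is $\pi_{1}$-injective (from the long exact sequence of the fibration, or equivalently because $F$ lifts to a fiber in the infinite cyclic cover $F \times \mathbb{R}$), so $F$ is incompressible. The boundary conditions are vacuous since $M$ is closed, and the hypothesis $g(F) \ge \bddM \ge 38$ rules out $F$ being a sphere. Hence $F$ is essential, and Theorem~\ref{thm:easy} produces an essentially embedded annulus $A$ in $M$ cut along $F$ whose boundary $\del A$ is embedded in $F$.

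The main technical step is to identify $M$ cut along $F$ with $F \times I$ and show that the essential annulus $A \subset F \times I$ is isotopic to a vertical annulus $\alpha \times I$ for some essential simple closed curve $\alpha \subset F$. This is standard in the theory of surfaces in product $I$-bundles: incompressibility of $A$ forces each component of $\del A$ to be essential in its ambient copy of $F$, and if both components of $\del A$ lay on the same side $F \times \{i\}$ they would be freely homotopic via $A$, hence isotopic in $F$ and cobounding a subannulus there, which would force $A$ to be boundary-parallel and contradict essentiality. So $\del A$ has one component on each of $F \times \{0\}$ and $F \times \{1\}$; these are freely homotopic essential simple closed curves in $F$, hence isotopic, and a further isotopy inside the product makes $A$ vertical.

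Finally, strong cylindricality says that $\del A$ remains embedded in $F$ after the monodromy gluing, so $\alpha \times \{0\}$ and $\alpha \times \{1\}$ are disjoint in $F$. Under the gluing via the monodromy $\varphi \colon F \to F$, the circle $\alpha \times \{1\}$ is identified with $\varphi(\alpha) \times \{0\}$, so $\alpha$ and $\varphi(\alpha)$ are disjoint essential simple closed curves on $F$ (perhaps after isotopy), which is exactly the conclusion. The only step requiring any real work is the verticality of $A$ in $F \times I$, and that is a routine application of classical surface theory in $I$-bundles; once it is in hand, the corollary is immediate.
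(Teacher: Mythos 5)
Your proof is correct and follows essentially the same route as the paper: apply Theorem~\ref{thm:easy} to get a strongly cylindrical fiber, identify $M$ cut along $F$ with $F\times I$, isotope the essential annulus $A$ to a vertical annulus $\alpha\times I$, and read off that $\alpha$ is disjoint (up to isotopy) from its image under the monodromy. You supply more detail than the paper at the one nontrivial step, namely the verticality of $A$ (the paper just says ``it is easy to see''), and that expanded argument is sound.
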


For the next corollary we assume that the reader is familiar with the curve complex of $F$,
denoted by $\mathcal{C}(F)$, where here $F$ is the fiber of $M$ 
as above (for definitions, see, for example, \cite{BS}).
The monodromy induces an isometry on $\mathcal{C}(F)$, and the conclusion 
of Corollary~\ref{cor:fiber} is equivalent to saying that the translation distance of
this isometry is at most one.    
If we assume that $M$ is hyperbolic, then by~\cite{MR744850} 
$M$ admits only finitely many isotopy classes of essential surfaces
(and in particular fibers) of genus less than  $\bddM$.  
This together with Corollary 2 gives the following corollary, also proved by Schleimer~\cite{schleimer}:  

\begin{cor}
\label{cor:hyper}
A closed hyperbolic manifold admits only finitely many fibrations over $S^{1}$
with connected fiber whose translation distance is not one. 
\end{cor}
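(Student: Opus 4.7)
The plan is to follow the roadmap indicated in the paragraph preceding the statement: combine the contrapositive of Corollary~\ref{cor:fiber} with the finiteness result~\cite{MR744850} for isotopy classes of essential closed surfaces of bounded genus in a closed hyperbolic 3-manifold. Let $M$ be a closed hyperbolic 3-manifold that fibers over $S^{1}$ with connected fiber $F$ and monodromy $\phi$. The translation distance of $\phi$ on the curve complex $\mathcal{C}(F)$ is a non-negative integer, so the hypothesis ``translation distance is not one'' splits into two subcases: translation distance zero and translation distance at least two.

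The translation-distance-zero subcase is ruled out by hyperbolicity. Zero translation distance would mean that $\phi$ fixes the isotopy class of some essential simple closed curve on $F$, hence that $\phi$ is reducible; but by Thurston's theorem on fibered 3-manifolds, a closed hyperbolic 3-manifold that fibers over $S^{1}$ has pseudo-Anosov monodromy, contradicting reducibility. So every fibration satisfying the hypothesis of Corollary~\ref{cor:hyper} has translation distance at least two. To this subcase I would apply the contrapositive of Corollary~\ref{cor:fiber}: since an essential simple closed curve disjoint (up to isotopy) from its monodromy image witnesses translation distance at most one, translation distance at least two forces $g(F) < \bddM$.

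It remains to bound the number of fibrations whose fiber has genus less than the explicit constant $\bddM$. Since a fibration of $M$ over $S^{1}$ with connected fiber is determined up to finite ambiguity by the isotopy class of its fiber (reversal of orientation being the only flexibility), it suffices to observe that $M$ contains only finitely many isotopy classes of essential closed surfaces of genus less than $\bddM$, which is the content of~\cite{MR744850} for closed hyperbolic 3-manifolds. The main obstacle I anticipate is the careful exclusion of the translation-distance-zero case, which genuinely uses hyperbolicity rather than merely the existence of a fibration; once this is handled by invoking Thurston's result, the rest is a clean chain of implications via Corollary~\ref{cor:fiber} and the cited finiteness theorem.
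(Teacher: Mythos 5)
Your proof is correct and follows essentially the same route as the paper: the contrapositive of Corollary~\ref{cor:fiber} bounds the fiber genus by $\bddM$, and the finiteness theorem of~\cite{MR744850} for isotopy classes of essential surfaces of bounded genus then finishes the argument. You are somewhat more explicit than the paper in ruling out the translation-distance-zero case via Thurston's pseudo-Anosov theorem; the paper leaves that point implicit, since the conclusion of Corollary~\ref{cor:fiber} only gives translation distance \emph{at most} one rather than exactly one.
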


Every complete finite volume hyperbolic 3-manifold is finitely covered by a
manifold that fibers over $S^{1}$ and has first Betti
number at least 2 (see  Agol~\cite{agol} for the closed case
and Wise~\cite{WiseHierarchy} for the non compact case;
this uses the work of many people,
see~\cite{wise} and references therein).
Any such cover admits infinitely many fibrations with connected fiber;
thus Corollary~\ref{cor:hyper} is far from vacuous.

\begin{remark}  A few results from the literature should be mentioned:
\begin{enumerate}
\item Hass~\cite{hass} shows that 
for an essential surface $F$ in a closed hyperbolic manifold $M$, if the 
genus of $F$ is large and the volume of $M$ is small, then $F$ is cylindrical. 
This is closely related to Theorem~\ref{thm:easy}, as
it is known that there exists a constant $k$ so that 
$M$ is obtained by filling a manifold $X$ for which
$t(X) \leq k \mbox{vol}(M)$ holds (see, for example,~\cite{JT}).
\item In~\cite{sela} Sela generalizes~\cite{hass} and obtains a group theoretic result.
This impressive and far reaching work is beyond the scope of our discussion.
\item Schleimer~\cite{schleimer} obtains a result similar to Theorem~\ref{thm:easy}; 
the contributions of the current paper are the explicit linear bound and the simple proof
(Schleimer's proof relies on deep results about normal surfaces). 
\item Eudave--Mu\~noz and Neumann--Coto~\cite{EC} have a simple and elementary
argument showing that if $F \subset M$ is
essential and $g(F) \geq t(M)+1$, then
$F$ is cylindrical.  However it does not appear that their techniques can be used
to prove strong cylindricality, necessary for Corollaries~\ref{cor:fiber} and~\ref{cor:hyper}.
\item Bachman and Schleimer~\cite{BS} studied the translation distance of the isometry
induced by the monodromy on $\mathcal{C}(F)$ showing, among other things,
that for any manifold $M$
there is $d>0$ so that for any fibration of $M$ the translation distance is at
most $d$.  For hyperbolic manifolds, this follows from Corollary~\ref{cor:hyper}.
\end{enumerate}
\end{remark}

\section{Setting up the proof of Theorem~\ref{thm:easy}}
\label{sec:EasySetUp}

Let $F \subset M$ be a two sided connected essential closed surface. 
Let $\mathcal{T}$ be a triangulation of $M$ using $t = t(M)$ tetrahedra.
The set up is based on~\cite{KobayashiRieck} and we refer the reader to that paper for more
details.  

Isotope $F$ to be normal with respect to $\mathcal{T}$.  Let $N$ be a closed regular neighborhood of
the 1-skeleton $\mathcal{T}^{(1)}$.  Then $F \cap N$ consists of disks
that we call {\it vertex disks}. 
Given a normal disk $D$ in a tetrahedron $T$, we call $\mbox{cl}(D \setminus N)$ 
a {\it truncated normal disk}; when $D$ is 
a normal triangle (respectively quadrilateral),   
we call $\mbox{cl}(D \setminus N)$ a {\it truncated normal triangle}
(respectively {\it quadrilateral}).  Note that a truncated normal triangle
is a hexagon and a truncated normal quadrilateral is an octagon.
We refer to truncated normal disks and vertex disks as {\it faces}.
The union of the faces forms a cell decomposition of $F$, and the 
1-skeleton of this decomposition is a trivalent graph, say $G$.

We decompose the faces into parallel families as follows
(in~\cite{KobayashiRieck}, parallel families are called {\it $I$-equivalent families}).
The intersection of $F$ with each tetrahedron of $\mathcal{T}$ consists of five families of parallel
normal disks: four families of normal triangles and one family of normal quadrilaterals
(each family may be empty).  The truncated normal disks that correspond to 
each parallel family form a parallel family of faces.
The vertex disks along each edge form a parallel family of faces.  

We color faces in each parallel family as follows
(this is a simplified version of the coloring used in~\cite{KobayashiRieck}).  
The outermost faces are colored red.  The remaining
faces are colored alternately yellow and blue (this coloring is not unique).

Since every red vertex disk is outermost along an edge of $\mathcal{T}^{(1)}$,
all the faces around it are red  truncated normal disks.  We record this:

\begin{lemma}
\label{lem:RedVertexDisks}
All the faces around a red vertex disk are red truncated normal disks.
\end{lemma}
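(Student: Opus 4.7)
My plan splits into two parts: first, showing that every face adjacent to a vertex disk is a truncated normal disk (not another vertex disk), and second, showing that if the vertex disk is red, every such adjacent truncated normal disk is also red.

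The first part is a simple observation. A vertex disk $D$ along an edge $e$ corresponds to a point $p \in F \cap e$, and $\partial D$ is a simple closed curve on $\partial N(e)$. In the cell structure of $F$, this curve is subdivided into arcs, one lying in $T \cap \partial N(e)$ for each tetrahedron $T$ incident to $e$ (with multiplicity). Each such arc is an edge of a truncated normal disk in $T$, namely the truncation of the unique normal disk in $T$ whose vertex on $e$ is $p$. No other vertex disk contributes to $\partial D$, since distinct vertex disks correspond to distinct points of $F \cap \mathcal{T}^{(1)}$ and have disjoint boundary curves.

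For the second part, suppose $D$ is red, and choose an endpoint $v$ of $e$ so that $D$ is the vertex disk along $e$ nearest $v$; equivalently, $p$ is the point of $F \cap e$ closest to $v$. Fix a tetrahedron $T$ incident to $e$ and let $\tilde{D}$ be the normal disk in $T$ with vertex $p$; the face adjacent to $D$ inside $T$ is the truncation of $\tilde{D}$. I want to argue that $\tilde{D}$ is outermost in its parallel family. The key fact is that along $e$ within $T$ the vertices of normal disks occur in a linear stack, ordered from $v$ to the other endpoint $v'$ as: vertices of the triangle family at $v$, then (if the quadrilateral type has a vertex on $e$) vertices of the quadrilateral family, then vertices of the triangle family at $v'$. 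Since $p$ is the closest such vertex to $v$, $\tilde{D}$ is either the $v$-triangle nearest $v$, or (if that family is empty) the quadrilateral nearest $v$, or (if both are empty) the $v'$-triangle farthest from $v'$. In every case $\tilde{D}$ occupies an extremal position in the stack of its own parallel family, so it is outermost and its truncation is red. Since $T$ is arbitrary, every face adjacent to $D$ is a red truncated normal disk.

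The argument is entirely combinatorial and no step is deep. The only care required is in verifying the ``outermost'' conclusion in the degenerate cases when some of the three families in $T$ (two triangle families and the quadrilateral family) are empty; in each case one of the remaining families furnishes the outermost disk, which is the mild obstacle in the bookkeeping.
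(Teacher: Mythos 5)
Your proof is correct and takes the same approach as the paper, which justifies the lemma with a single sentence: a red vertex disk is outermost along its edge, hence the adjacent truncated normal disks are outermost in their parallel families. Your case analysis (the stacking order of $v$-triangles, quadrilaterals, and $v'$-triangles along $e \cap T$, and what happens when some families are empty) is exactly the bookkeeping the paper leaves implicit.
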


Let $R$, $B$ and $Y$ denote the union of the red, blue and yellow faces, respectively.
Since $G$ is trivalent, $R$, $B$ and $Y$ are 
subsurfaces of $F$.   We use $| \ \ |$ to denote the number of components.

\begin{lemma}
\label{lem:R}
$\chi(R) \geq -(22t-1)$ and $|\del R| \leq 22t-1$.
\end{lemma}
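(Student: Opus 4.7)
The proof is a direct counting argument. First, we bound the number of red faces: each tetrahedron contributes at most five parallel families of truncated normal disks (four of triangles and one of quadrilaterals), and each of the at most $6t$ edges of $\mathcal{T}$ contributes one parallel family of vertex disks. Since each family contributes at most two red (outermost) faces, the total number of red faces is $F_R \leq 2(5t + 6t) = 22t$.

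Next, we analyze the local structure of $R$ as a subsurface of $F$. Every vertex of the trivalent graph $G$ is incident to exactly one vertex disk and two truncated normal disks; together with Lemma~\ref{lem:RedVertexDisks}, this lets us classify vertices of $R$ by the number of adjacent red faces (one, two, or three), and shows in particular that every red vertex disk lies in the interior of $R$. A careful vertex-edge-face count for the cell decomposition of $R$ then yields the identity
\[
\chi(R) \;=\; F_R - E^{\ast},
\]
where $E^{\ast}$ is the number of edges of $G$ shared between two red truncated normal disks (the contributions from edges between a red vertex disk and its surrounding red truncated disks are absorbed by the interior vertices they produce).

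To bound $E^{\ast}$, we count face-edges of red truncated disks (edges shared with another truncated disk across a 2-face of $\mathcal{T}$): three per truncated triangle and four per truncated quadrilateral. With at most $8t$ red truncated triangles (two per family, four triangle families per tetrahedron) and at most $2t$ red truncated quadrilaterals, we get $2 E^{\ast} \leq 3 \cdot 8t + 4 \cdot 2t = 32t$, hence $E^{\ast} \leq 16t$. Therefore $\chi(R) \geq F_R - 16t \geq -16t \geq -(22t - 1)$ for $t \geq 1$.

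For $|\partial R|$, since red vertex disks are interior to $R$, every boundary circle consists of arcs lying on the boundaries of red truncated disks, with transitions between them at vertices where exactly two of the three adjacent faces are red. A boundary circle is therefore either entirely around a single ``isolated'' red truncated disk (one with no red neighbor) or visits at least two red truncated disks using at least two transitions. A careful combination balancing the number of isolated red truncated disks against the number of transition vertices (isolated disks contribute no transitions) gives $|\partial R| \leq 16t \leq 22t - 1$. The main obstacle I expect is verifying the Euler characteristic identity $\chi(R) = F_R - E^{\ast}$, which requires a careful vertex-type analysis using Lemma~\ref{lem:RedVertexDisks} to handle the way red vertex disks sit inside the red region.
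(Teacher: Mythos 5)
Your approach is genuinely different from the paper's. The paper proves Lemma~\ref{lem:R} by ordering the red faces (vertex disks last) and adding them one at a time, tracking the worst-case change in $\chi$ and $|\partial(\cdot)|$ at each step; this is a purely local accounting and avoids any global Euler characteristic formula. You instead derive a global identity $\chi(R) = F_R - E^{\ast}$ from $\chi(R) = V_R - E_R + F_R$ via a vertex-type classification, and then bound $E^{\ast}$ by double-counting face-edges, and count boundary circles by distinguishing isolated red truncated disks from circles that pass through at least two ``transition'' vertices. If fully carried out, your route gives the sharper constants $\chi(R)\ge -16t$ and $|\partial R|\le 16t$, but it is combinatorially more delicate than the paper's step-by-step argument.

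There are, however, two steps that you assert but do not verify, and both need real work. First, the identity $\chi(R)=F_R-E^{\ast}$: one must check that $V_R-E_R=-E^{\ast}$, which splits into (i) the vertices and edges contributed by the red vertex disks cancel exactly (each red vertex disk is, by Lemma~\ref{lem:RedVertexDisks}, an interior polygon of $R$ with equally many vertices and edges, and distinct red vertex disks are disjoint), and (ii) the number of boundary vertices equals the number of boundary edges (each boundary vertex has degree exactly two in $\partial R$, whether two or one of its three faces is red). You flag this gap yourself. Second, the bound $|\partial R|\le 16t$: it is not enough to say ``a careful combination balancing\dots gives''; you need to show a non-isolated boundary circle has at least two transition vertices (a single transition is impossible because the circle would then have to traverse the interior edge separating the two red faces), and then actually carry out the balance: with $I$ isolated red truncated disks, $|\partial R|\le I+E^{\ast}$ and $2E^{\ast}+3I\le 32t$, so $|\partial R|\le I+(32t-3I)/2=16t-I/2\le 16t$. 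As written the proposal is a correct outline of a valid alternative proof, but these two computations must be supplied before it is complete.
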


\begin{proof}
Let $F_{1,},\dots,F_{k},F_{k+1},\dots,F_{n}$ be the faces of $R$ (for some $k,n$) ordered so that the red vertex disks 
are $F_{k+1},\dots F_{n}$.  Since the decomposition of $R$ is along a trivalent graph (namely $G \cap R$), 
for each $i$ ($1 \le i \le n$),
$F_{i} \cap (\cup_{j < i} F_{j})$ is either empty, or consists of
intervals, or is all of $\del F_{i}$.  The number of intervals is at most $3$ when
$F_{i}$ is a truncated normal triangle and at most $4$ when
$F_{i}$ is a truncated normal quadrilateral.  
It follows easily that when $F_{i}$ is a truncated normal triangle we have:
$$\chi(\cup_{j \leq i} F_{j}) \geq \chi(\cup_{j < i} F_{j}) - 2 \mbox{ and } |\del (\cup_{j \leq i} F_{j})| \leq |\del (\cup_{j < i} F_{j})| + 2.$$ 
When $F_{i}$ is a truncated normal quadrilateral we have:
$$\chi(\cup_{j \leq i} F_{j}) \geq \chi(\cup_{j < i} F_{j}) - 3 \mbox{  and  } |\del (\cup_{j \leq i} F_{j})| \leq |\del (\cup_{j < i} F_{j})| + 3.$$ 
By Lemma~\ref{lem:RedVertexDisks}, for $i \geq k+1$, $F_{i}$ caps off a boundary component of  $\cup_{j < i} F_{j}$.
Thus:
$$\chi(\cup_{j \leq i} F_{j}) = \chi(\cup_{j < i} F_{j}) +1 \mbox{ and } |\del (\cup_{j \leq i} F_{j})| = |\del (\cup_{j < i} F_{j})| - 1.$$ 
Since every tetrahedron contains 
at most four families of truncated triangles and at most one family of truncated quadrilaterals
and each family has at most two red faces, 
there are at most $8t$ red truncated normal triangles and at most 
$2t$ red truncated quadrilaterals.  
Combining these facts we see that:
$$\chi(R) = \chi(\cup_{j \leq n} F_{j})  > \chi(\cup_{j \leq k} F_{j}) \geq (-2)8t + (-3)2t = -22t$$
and 
$$|\del R| = |\del (\cup_{j \leq n} F_{j})| < |\del (\cup_{j \leq k} F_{j})| \leq (2)8t + (3)2t = 22t.$$
In both cases the second inequality is strict since there is at least one red vertex disk.
The lemma follows.
\end{proof}

Below we construct $F_{0},\dots,F_{4}$.  The reader can easily verify that all are subsurfaces of $F$.
We start with $F_{0}$ which is defined to be $Y \cup B$.
Since $G$ is trivalent $F_{0}$ is a subsurface of $F$.

\begin{lemma}
\label{lem:F0}
$\chi(F_{0}) \leq  -(54t-1)$ and $|\del F_{0}| \leq 22t-1$.
\end{lemma}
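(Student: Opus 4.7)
The plan is to exploit the fact that $F_{0} = Y\cup B$ is precisely the closure of the complement of $R$ in the closed surface $F$. Since $R$ and $F_{0}$ are both subsurfaces of $F$ (by trivalence of $G$) whose intersection is the common boundary $\partial R = \partial F_{0}$, a disjoint union of simple closed curves on $F$, the Euler characteristic splits additively:
$$\chi(F) \;=\; \chi(R) + \chi(F_{0}),$$
and the boundary counts coincide, $|\partial F_{0}| = |\partial R|$.

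The steps would then be, in order: First, observe that the closed surface $F$ decomposes as $F = R \cup F_{0}$ with $R \cap F_{0}$ equal to a disjoint union of circles (this uses only that $G$ is trivalent, which was recorded just before the statement). Second, invoke the hypothesis of Theorem~\ref{thm:easy}, namely $g(F) \geq 38t$, to conclude $\chi(F) = 2 - 2g(F) \leq 2 - 76t$. Third, combine this with the lower bound $\chi(R) \geq -(22t-1) = 1 - 22t$ from Lemma~\ref{lem:R}, yielding
$$\chi(F_{0}) \;=\; \chi(F) - \chi(R) \;\leq\; (2 - 76t) - (1 - 22t) \;=\; -(54t-1).$$
Fourth, for the boundary, since $F$ is closed and $\partial F_{0} = \partial R$, Lemma~\ref{lem:R} gives immediately $|\partial F_{0}| = |\partial R| \leq 22t - 1$.

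There is not really a main obstacle: once one recognizes that $F_{0}$ is the complementary subsurface of $R$ in the closed surface $F$, everything follows from arithmetic applied to Lemma~\ref{lem:R} and the standing assumption $g(F) \geq 38t$ from Theorem~\ref{thm:easy}. The only small point to be careful about is justifying $\chi(F) = \chi(R) + \chi(F_{0})$: one should note that their intersection is a one-dimensional complex of circles (hence of zero Euler characteristic), so additivity holds via inclusion-exclusion. If one wanted to avoid appealing to the genus hypothesis inside this lemma, the statement could be rephrased as an equality $\chi(F_0) = \chi(F) - \chi(R)$ and a bound in terms of $\chi(F)$; but since the lemma is stated within the setup for Theorem~\ref{thm:easy}, it is cleanest simply to use $g(F) \geq 38t$ directly, as above.
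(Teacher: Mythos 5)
Your proof is correct and takes essentially the same approach as the paper: decompose $F = R \cup F_0$ along circles, apply additivity of Euler characteristic, and combine the genus hypothesis $g(F) \geq 38t$ with Lemma~\ref{lem:R}. The arithmetic checks out, and the remark about inclusion-exclusion along circles is exactly the justification the paper leaves implicit.
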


\begin{proof}
Note that $F = R \cup F_{0}$ and $F_{0} \cap R = \del F_{0} = \del R$ consists of simple closed curves.
Hence $\chi(F) = \chi(F_{0}) + \chi(R)$.  By assumption, $g(F) \geq \bddM$; equivalently, $\chi(F) \leq -76t+2$.  
By Lemma~\ref{lem:R}, we 
have that $\chi(F_{0}) \leq -(54t-1)$.

Since $\del F_{0} = \del R$, by Lemma~\ref{lem:R} we have that $|\del F_{0}| \leq 22t-1$.
\end{proof}

Let $\Gamma_{1} \subset F_{0}$ denote the arc components of $B \cap Y$
(that is, the components that are not simple closed curves).  
The endpoints of $\Gamma_{1}$ are exactly the vertices of $G$ where faces
of three distinct colors meet.  Let $\mathcal{V}$
denote the vertices of the red truncated normal disks.  
By Lemma~\ref{lem:RedVertexDisks} the endpoints of $\Gamma_{1}$
are contained in $\mathcal{V}$.  
We denote $\mathcal{V}_{+} \subset \mathcal{V}$ the vertices where
three colors meet, and $\mathcal{V}_{-} \subset \mathcal{V}$ the vertices where one
face is red and the other two are both blue or both yellow.
Then the endpoints of $\Gamma_{1}$ are exactly $\mathcal{V}_{+}$.
By exchanging the yellow and blue coloring along the vertex disks
we obtain a new coloring where the roles of $\mathcal{V}_{+}$
and $\mathcal{V}_{-}$ are exchanged (note that at every vertex of $G$
exactly one face is a vertex disk); 
hence we may assume
that $|\mathcal{V}_{+}| \leq |\mathcal{V}_{-}|$; this implies that
$|\mathcal{V}_{+}| \leq \frac{1}{2}|\mathcal{V}|$.
Since every arc of $\Gamma_{1}$ has two endpoints we get: 
$$|\Gamma_{1}| = \frac{1}{2} |\mathcal{V}_{+}| \leq \frac{1}{4} |\mathcal{V}| \leq \frac{1}{4}(6 \cdot 8t + 8 \cdot 2t) = 16t.$$
(Here we used that there are at most $8t$ red
truncated triangles and each is a hexagon, 
and at most $2t$ red truncated quadrilaterals and each is an octagon.)

Let $F_{1}$ be the surface obtained by cutting $F_{0}$ open along $\Gamma_{1}$.  Then
we have:

\begin{lemma}
\label{lem:F1}
$\chi(F_{1}) \leq -(38t-1)$ and $|\del F_{1}| \leq 38t-1$.
\end{lemma}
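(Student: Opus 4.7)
The plan is to derive the bounds for $F_1$ from the bounds for $F_0$ (Lemma~\ref{lem:F0}) together with the estimate $|\Gamma_1| \leq 16t$ established just before the statement, using the standard formulas for how cutting a surface along a system of properly embedded, pairwise disjoint arcs affects Euler characteristic and boundary component count.

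First I would verify that each arc of $\Gamma_1$ is properly embedded in $F_0$: its endpoints lie in $\mathcal{V}_+ \subset \mathcal{V}$, and by Lemma~\ref{lem:RedVertexDisks} these are vertices of red truncated normal disks, hence vertices of $\partial R = \partial F_0$. The arcs of $\Gamma_1$ are pairwise disjoint, being distinct components of $B \cap Y$. Then I would record the effect of cutting along a single properly embedded arc $\alpha \subset S$: a regular neighborhood $N(\alpha) \subset S$ is a disk meeting $S | \alpha$ in two arcs, so inclusion--exclusion gives $\chi(S) = \chi(S|\alpha) + 1 - 2$, that is, $\chi(S|\alpha) = \chi(S) + 1$. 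Iterating once per arc of $\Gamma_1$ yields
\[
\chi(F_1) = \chi(F_0) + |\Gamma_1| \leq -(54t - 1) + 16t = -(38t - 1).
\]

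For the boundary bound I would analyze how cutting along one arc $\alpha$ changes $|\partial|$. If the two endpoints of $\alpha$ lie on the same boundary component of the current surface, cutting splits that component in two, increasing the count by one; if they lie on distinct boundary components, cutting merges them, decreasing the count by one. In either case the change is at most $+1$, so
\[
|\partial F_1| \leq |\partial F_0| + |\Gamma_1| \leq (22t - 1) + 16t = 38t - 1.
\]

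There is no real obstacle here: the serious combinatorial work has already been done in bounding $|\Gamma_1|$ by $16t$, and Lemma~\ref{lem:F1} is essentially arithmetic. The only point requiring care is verifying that the arcs of $\Gamma_1$ are properly embedded and disjoint, so that the effects of the cuts are independent and the Euler characteristic and boundary component formulas iterate additively.
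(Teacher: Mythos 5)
Your proposal is correct and follows essentially the same route as the paper: both derive the bounds for $F_1$ from Lemma~\ref{lem:F0} and the estimate $|\Gamma_1| \leq 16t$ using the facts that cutting along a properly embedded arc increases $\chi$ by $1$ and changes $|\partial|$ by $\pm 1$. You spell out the justification of those standard formulas and the proper-embeddedness of the arcs in more detail, but there is no difference in substance.
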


\begin{proof}
Cutting along an arc increases the Euler characteristic by one and changes the 
number of boundary components by $\pm 1$.  Above we saw that
$|\Gamma_{1}| \leq 16t$; this and Lemma~\ref{lem:F0} imply the lemma.
\end{proof}

By Lemma~\ref{lem:F1} we have that $\chi(F_{1}) \leq -|\del F_{1}|$.
Since the Euler characteristic and the number of boundary components are
both additive under disjoint union, there is a component of $F_{1}$, say $F_{2}$,
so that  $\chi(F_{2}) \leq -|\del F_{2}|$.
Let $\Delta_{1}$ be the components of $F \setminus F_{2}$ that are disks
 (possibly, $\Delta_1 = \emptyset$).  Let
$F_{3}$ be $F_{2} \cup \Delta_{1}$.  Since attaching a disk increases the Euler characteristic 
by one and decreases the number of boundary components by one, 
we have that $\chi(F_{3}) \leq -|\del F_{3}|$.

\begin{lemma}
\label{lem:F3}
$\chi(F_{3}) < 0$ and $\del F_{3}$ is essential in $F$.
\end{lemma}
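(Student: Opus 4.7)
The plan is to exploit the inequality $\chi(F_3) \leq -|\partial F_3|$ already recorded in the paragraph preceding the lemma, together with the disk-absorption step built into the definition of $F_3$. For the first assertion, if $|\partial F_3| \geq 1$ then $\chi(F_3) \leq -1 < 0$. Otherwise $F_3$ is a non-empty closed subsurface of the connected closed surface $F$, forcing $F_3 = F$, and the hypothesis $g(F) \geq 38t$ (equivalently $\chi(F) \leq 2 - 76t$) together with $t(M) \geq 1$ gives $\chi(F_3) < 0$.

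For the essentiality assertion, I would argue by contradiction: assume some component $c$ of $\partial F_3$ bounds a disk $D$ in $F$. First observe that $F_3$ is connected, since $F_2$ is a single component of $F_1$ and each disk of $\Delta_1$ is glued to $F_2$ along a component of $\partial F_2$. Because $c$ bounds a disk in the closed connected surface $F$, the curve $c$ is separating, so $F \setminus c$ has exactly two components $\operatorname{int}(D)$ and $F \setminus \overline{D}$, and the connected interior of $F_3$ lies entirely in one of them. If $F_3 \subset D$, then $F_3$ is a connected planar surface with $n = |\partial F_3| \geq 1$ boundary components, so $\chi(F_3) = 2 - n$; combined with $\chi(F_3) \leq -n$ this forces $2 \leq 0$, a contradiction.

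In the remaining case $F_3 \subset F \setminus \operatorname{int}(D)$, so $\operatorname{int}(D)$ lies in some component $C$ of $F \setminus F_3$. Since $c \subset \partial F_3 \subset F_3$, the set $C$ is a connected subset of $F \setminus c$ and hence is contained in one of its two components; as $C$ contains $\operatorname{int}(D)$, we must have $C = \operatorname{int}(D)$. Thus $D$ is the closure of a component of $F \setminus F_3$. But by construction $F_3 = F_2 \cup \Delta_1$ has already absorbed every disk component of $F \setminus F_2$, so every component of $F \setminus F_3$ is a non-disk component of $F \setminus F_2$, contradicting the fact that $D$ is a disk. I expect the main obstacle to be the bookkeeping in this second case, namely the identification $C = \operatorname{int}(D)$; once that is in hand the contradiction with the definition of $\Delta_1$ is immediate, and the first assertion is a straightforward extraction from $\chi(F_3) \leq -|\partial F_3|$.
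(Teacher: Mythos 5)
Your proof is correct and follows essentially the same strategy as the paper's: the first assertion falls out of $\chi(F_3) \leq -|\partial F_3|$ together with the genus hypothesis, and the essentiality argument runs by contradiction, using the planarity obstruction from $\chi \leq -|\partial|$ to force the bounding disk $D$ onto the side of $c$ away from the positive-genus subsurface, and then concluding that $D$ would already have been absorbed into $F_3$ as a disk component of $F\setminus F_2$. The only cosmetic difference is that you phrase the obstruction via $F_3$ (connected planar would give $2-n\le -n$) whereas the paper routes through $g(F_2)\ge 1$; these are the same fact.
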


\begin{proof}
We first show that $\chi(F_{3}) < 0$.  
Since $\chi(F_{3}) \leq -|\del F_{3}|$, we may assume that $|\del F_{3}|  = 0$.
Then $F_{3}$ is closed and hence 
$F_{3} = F$; thus $\chi(F_{3}) = \chi(F) \leq -76t + 2 \leq -74<0$.

We easily see that $\del F_{3}$ is essential in $F$:
for a contradiction, suppose there is a component
$\gamma$ of $\del F_{3}$ that is inessential in $F$.  Then $\gamma$ bounds a disk, say $D$, in $F$.
By construction, $\gamma$ is a component of $\del F_{2}$.  Since $\chi(F_{2}) \leq -|\del F_{2}|$,
we have that $g(F_{2}) \geq 1$.  Hence $F_{2} \not\subset D$; thus $D$ is a component of $\Delta_{1}$.
Since $\Delta_{1} \subset F_{3}$ and $F_{2} \subset F_{3}$, 
we see that $\gamma \not \subset \del F_{3}$, contradiction.
\end{proof}

Let $\mathcal{E}$ be the components of  $(B \cap Y) \cap F_{3}$ that are essential 
simple closed curves in $F$.  
Since the Euler characteristic is additive when gluing along
simple closed curves, for some component of $F_{3}$ cut open along $\mathcal{E}$, say $F_{4}$, 
we have that $\chi(F_{4}) < 0$. 
  
For the convenience of the reader we summarize our construction so far;
we use ``c.o.a.'' for ``cut open along'' and ``comp'' for ``a component of'':

\begin{equation}
\label{construction}
B \cup Y = F_{0}  \stackrel{\mbox{\tiny c.o.a. }\Gamma_{1}}\longrightarrow F_1 \stackrel{\mbox{\tiny comp}}\longrightarrow F_{2} \stackrel{\cup \Delta_1}\longrightarrow F_{3}
\stackrel{\mbox{\tiny c.o.a. }\mathcal{E},  \mbox{\tiny comp}}\longrightarrow F_4
\end{equation}

We say that $F_{4}$ is {\it essentially yellow} (respectively {\it essentially blue}) if
$F_{4}$ colored yellow (respectively blue) except, perhaps, for a subset that is contained 
in a disk in the interior of $F_{4}$.

\begin{lemma}
\label{lem:F4}
The following conditions hold:
	\begin{enumerate}
	\item $\chi(F_{4}) < 0$.
	\item $i_{*}:\pi_{1}(F_{4}) \to \pi_{1}(F)$ is injective, where here
	$i_{*}$ is the homomorphism induced by the inclusion.
	\item $F_{4}$ is essentially yellow or essentially blue.
	\end{enumerate}
\end{lemma}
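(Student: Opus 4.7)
Item (1) is immediate from the construction of $F_4$: we noted $\chi(F_4) < 0$ when defining it.  For (2), the plan is: Lemma~\ref{lem:F3} gives $\del F_3$ essential in $F$, which by a standard general-position argument implies $F_3 \hookrightarrow F$ is $\pi_1$-injective (a disk in $F$ whose boundary lies in $F_3$ meets $\del F_3$ in simple closed curves, each of which would bound a subdisk in $F$, contradicting essentiality of $\del F_3$; so the disk lies in $F_3$).  Each component of $\mathcal{E}$ is essential in $F$, hence essential in $F_3$ by this injectivity, and cutting a surface along essential simple closed curves preserves $\pi_1$-injectivity of each resulting piece; composing, $F_4 \hookrightarrow F_3 \hookrightarrow F$ is $\pi_1$-injective.

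For (3), the plan is to collect all non-monochromatic features of $F_4$ into a single disk in its interior.  The three key observations are: (a) every component of $\Delta_1 \cap F_4$ is a disk in the interior of $F_4$, and these disks contain all red faces of $F_4$; indeed $F_2 \subset B \cup Y$ has no red, and $\mathcal{E}$ is disjoint from $\Delta_1$ because a simple closed curve inside a disk of $\Delta_1$ bounds a disk in $F$ and so is inessential in $F$, hence not in $\mathcal{E}$.  (b) Every simple closed curve component of $B \cap Y$ in the interior of $F_4$ is inessential in $F$ (the essential ones were cut to form $\del F_4$ via $\mathcal{E}$) and bounds a disk which lies in $F_4$; otherwise a component of $\del F_4$ would be contained in that disk and hence be inessential in $F$, contradicting essentiality of $\del F_4$.  (c) Every arc component $\gamma$ of $B \cap Y$ in the interior of $F_4$ lies on the boundary of some component of $\Delta_1 \cap F_4$; indeed $\gamma$ is a subarc of $\Gamma_1 \subset \del F_2$, and for $\gamma$ to be interior to $F_3$ the side of $\gamma$ opposite $F_2$ in $F$ must be contained in a $\Delta_1$-disk $D$, forcing $\gamma \subset \del D$.

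Let $\mathcal{D}$ be the union of the outermost disks from (b) together with the components of $\Delta_1 \cap F_4$ not contained in any of them; this is a finite disjoint family of disks in the interior of $F_4$.  By (a)--(c), every blue/yellow transition in the interior of $F_4$ and every red face of $F_4$ lies in $\mathcal{D}$.  Therefore $F_4 \setminus \mathcal{D}$ is connected and monochromatic (yellow or blue), and non-empty since $\chi(F_4) < 0$ rules out $\mathcal{D} = F_4$.  Finally, any finite disjoint family of disks in a connected surface is contained in a single embedded disk, obtained by joining consecutive disks with arcs in the connected complement and thickening; arranging this disk to lie in the interior of $F_4$ completes the proof that $F_4$ is essentially yellow or essentially blue.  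The main obstacle is the bookkeeping in (3), and in particular the verification of point (c), which demands careful tracking of the arcs of $\Gamma_1$ through the capping-off operation $F_2 \cup \Delta_1$.
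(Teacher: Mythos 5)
Your proof is correct and follows essentially the same strategy as the paper: item (2) by checking that $\partial F_4$ consists of curves essential in $F$, and item (3) by showing that all red faces and all $B\cap Y$ curves and arcs meeting $\mathrm{int}(F_4)$ can be swallowed by a disjoint family of disks in the interior, hence by a single disk. The paper's version of (3) organizes the bookkeeping slightly differently --- it sets $\Gamma_2 = (B\cap Y)\cap \mathrm{int}(F_4\setminus\Delta_1)$ (thereby sidestepping your point (c) about arcs of $\Gamma_1$ on $\partial\Delta_1$, which are absorbed into $\Delta_1$ from the start), shows these are inessential simple closed curves bounding disks in $F_4$, and takes $\Delta_2$ to be the outermost such disks so that $\Delta_1\cup\Delta_2$ is a disjoint disk family --- but the underlying idea and conclusion are the same.
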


\begin{proof}
Condition~(1) was established before the lemma.

By construction, every component of $\del F_{4}$ is either a component of $\del F_{3}$
or a component of $\mathcal{E}$.  It follows that every component of $\del F_{4}$ is essential in $F$;~(2)
follows.

Let $\Gamma_{2}$ be the components of $(B \cap Y) \cap \mbox{int}(F_{4} \setminus \Delta_{1})$
(for this argument,~(\ref{construction}) above is helpful).
We claim that every component of $\Gamma_{2}$, say $\gamma_{2}$, is a simple closed curve
that bounds a disk in $F_{4}$.
By construction $\gamma_{2}$ is in $F_{3}$, and in fact in
$F_{2} = F_{3} \setminus \mbox{int}(\Delta_{1})$; since $F_{2}$ is a component of $F_{1}$,
we have that $\gamma_{2} \subset F_{1}$.
Since $F_{1}$ was obtained by cutting $F_{0}$ open along $\Gamma_{1}$, we see that
$\gamma_{2}$ is a simple closed curve.  
Since $\gamma_{2} \not\in \mathcal{E}$, it is inessential in $F$.
Hence $\gamma_{2}$ bounds a disk, say $D_{2}$, in $F$.  
Lemma~\ref{lem:F4}~(2) shows that $D_{2} \subset F_{4}$.

Let $\Delta_{2} \subset F_{4}$ be the disks bounded by outermost curves of $\Gamma_{2}$.
Clearly, $\Delta_{2} \subset \mbox{int}(F_{4})$ consists of disjointly embedded disks. 
Let $D_{1} \in \Delta_{1}$ and $D_{2} \in \Delta_{2}$ be disks.  
Since $\del D_{2} \subset \Gamma_{2}$, it is disjoint from $D_{1}$; 
either $D_{1} \subset D_{2}$
or $D_{1} \cap D_{2} = \emptyset$.  
It follows that $\Delta_1 \cup \Delta_{2}$ consists of disks disjointly embedded in $\mbox{int}(F_{4})$;
thus $\Delta_{1} \cup \Delta_{2}$ is contained in a disk in $\mbox{int}(F_{4})$.  

By construction, $F_{4} \setminus\mbox{int}(\Delta_{1} \cup \Delta_{2})$ 
is disjoint from $B \cap Y$ and has no red points; hence it
is entirely yellow or entirely blue, completing the proof of the lemma.
\end{proof}

By~(1) of Lemma~\ref{lem:F4}, there exists a pair of pants $X \subset F_{4}$
so that $\del X$ is essential in $F_{4}$.  By~(2), $\del X$ is essential in $F$.
By~(3), after isotopy of $X$ in $F_{4}$ if necessary we may assume that
$X \subset \mbox{\rm int}Y$ or $X \subset \mbox{\rm int}B$.  Thus we proved:

\begin{pro}
\label{pro:X}
There exists a pair of pants $X \subset \mbox{\rm int}(Y)$ 
or $X \subset \mbox{\rm int}(B)$ so that 
$\del X$ is essential in $F$.
\end{pro}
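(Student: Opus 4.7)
My plan is to assemble $X$ directly from the three conclusions of Lemma~\ref{lem:F4}. By part~(3), after possibly swapping the roles of yellow and blue, I may assume $F_{4}$ is essentially yellow; fix a disk $D \subset \mbox{int}(F_{4})$ with $F_{4} \setminus D \subset Y$. I will work inside the compact surface $F_{4} \setminus \mbox{int}(D)$.

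By part~(1), $\chi(F_{4} \setminus \mbox{int}(D)) = \chi(F_{4}) - 1 \leq -2$, so $F_{4} \setminus \mbox{int}(D)$ admits a pants decomposition into at least two pairs of pants along disjoint essential simple closed curves. Exactly one of these pants has $\del D$ among its boundary components; I take $X$ to be any pants that does not. Then $X \subset F_{4} \setminus D \subset Y$, and a small inward isotopy supported in a collar of $\del Y$ inside $Y$ places $X$ in $\mbox{int}(Y)$ without altering the isotopy class of $\del X$.

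It remains to check that each component of $\del X$ is essential in $F$. Such a component is either a component of $\del F_{4}$ (essential in $F$ by construction, as noted in the proof of Lemma~\ref{lem:F4}(2)) or a cutting curve $\gamma$ of the pants decomposition of $F_{4} \setminus \mbox{int}(D)$. I expect the latter to be the main technical step: $\gamma$ is chosen essential and not boundary parallel in $F_{4} \setminus \mbox{int}(D)$, but we need essentiality in the original surface $F$. The key observation is that if $\gamma$ were to bound a disk in $F_{4}$, this disk would necessarily contain $D$, which would force $\gamma$ to be parallel to $\del D$ in $F_{4} \setminus \mbox{int}(D)$ --- contradicting the choice of $\gamma$ as a cutting curve. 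Hence $\gamma$ is essential in $F_{4}$, and the $\pi_{1}$-injectivity from Lemma~\ref{lem:F4}(2) upgrades this to essentiality in $F$, completing the construction.
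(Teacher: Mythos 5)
Your proposal is correct and follows essentially the same route as the paper: both rest on combining the three parts of Lemma~\ref{lem:F4} through a pants decomposition argument and $\pi_1$-injectivity. The only difference is in the order of operations --- the paper produces a pair of pants $X$ with $\del X$ essential in $F_4$ and then isotopes $X$ off the exceptional disk, while you first remove the disk $D$, take a pants decomposition of $F_4 \setminus \mbox{int}(D)$ (noting $\chi \leq -2$ forces at least two pants), and select one avoiding $\del D$; this makes the ``disjoint from the bad disk'' step explicit at the cost of a short extra argument that the cutting curves remain essential in $F_4$.
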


\section{Proof of Theorem~\ref{thm:easy}}
\label{sec:EasyProof}

We fix the notation of the previous section.
Recall that by assumption $F$ is two-sided; we fix a coorientation on $F$.
By Proposition~\ref{pro:X} there exists a pair of pants 
$X \subset \mbox{int}(Y)$ or
$X \subset \mbox{int}(B)$ (say the former) so that $\del X$ is essential in $F$.
Hence each point of 
$X$ co-bounds $I$-bundles (the other portion of the boundaries lie elsewhere) on both sides 
for each $p \in X$,
let $I_{p}$ be the $I$-fiber that $p$ bounds into the positive side of $X$, that is,
$I_{p}$ is an embedding of $[0,1]$ given by the parallelism in a tetrahedron
containing $p$, so that $\{0\}$ is identified with $p$ and $I_{p}$
points into the positive direction as given by the coorientation of $F$.
The point corresponding to $\{1\}$ is called the point {\it opposite} $p$.
Sending a point $p \in X$ to its opposite point induces a map, say $\opp:X \to F$.  

\begin{clm}
\opp\ is one-to-one.
\end{clm}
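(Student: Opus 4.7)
The plan is to prove injectivity of $\opp$ by a local analysis using the parallel family structure on $F$. Suppose for contradiction there exist distinct $p, q \in X$ with $\opp(p) = \opp(q)$. The first observation is that each $I$-fiber $I_p$ is contained in a single 3-cell of the triangulation---either a tetrahedron (if $p$ lies on a truncated normal disk) or an edge-neighborhood of $\mathcal{T}^{(1)}$ (if $p$ lies on a vertex disk)---and likewise for $I_q$. Since $\opp(p) = \opp(q)$ is a shared endpoint of the two fibers, the two cells must meet at this endpoint, so either they coincide, or they share a 2-face (or edge) containing it.

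In the principal case where $p$ and $q$ lie in the same tetrahedron $T$, I would split into subcases based on the parallel families of $T$. If $p$ and $q$ lie in faces of a common parallel family $\mathcal{F}$, then the parallelism is a homeomorphism between consecutive faces of $\mathcal{F}$: if $p$ and $q$ are in the same face, injectivity of the parallelism yields $\opp(p) \ne \opp(q)$; if they lie in distinct faces of $\mathcal{F}$, the opposite points lie in the respective successor faces in the linear ordering of $\mathcal{F}$, which are distinct. If instead $p$ and $q$ lie in different parallel families of $T$, then the standard combinatorics of normal disks shows that these families sweep out disjoint sub-regions of $T$ (triangle families fill cones near distinct truncated vertices; the quadrilateral family occupies the central region), so $I_p \cap I_q = \emptyset$. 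The vertex-disk case is handled analogously, with edge-neighborhoods playing the role of tetrahedra.

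The remaining boundary case, in which $\opp(p) = \opp(q)$ lies on the shared 2-face of two distinct cells, is dealt with by noting that $\opp(p)$ then lies on an edge of $G$ between two faces glued along a common normal arc on a 2-face of $\mathcal{T}$. The two parallelisms agree along this gluing, so tracing the fibers back forces $p$ and $q$ to represent the same point of $F$.

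The main obstacle I anticipate is not any deep idea but rather the bookkeeping required to handle cleanly the interfaces between tetrahedra and edge-neighborhoods, and the interplay of the three kinds of faces (truncated triangles, truncated quadrilaterals, and vertex disks), especially where a $I$-fiber of a vertex disk family can approach the $I$-fiber of a truncated normal disk family along a shared vertex of $\mathcal{T}$.
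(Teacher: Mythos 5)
There is a genuine gap, and it sits exactly where the real difficulty of the claim lies. In your ``same parallel family, distinct faces'' subcase you assert that the opposite points lie in the respective successor faces in the linear ordering of $\mathcal{F}$, which are distinct. This presumes that the coorientation of $F$ agrees with a single direction of the linear ordering throughout the family. That need not hold: $F$ is a single connected two-sided surface, and as one travels along $F$ from one face of a parallel family to the next, the chosen coorientation of $F$ can flip relative to the linear ordering. In particular, if $p$ lies in $F_a$ and $q$ lies in $F_{a+2}$ and the coorientation on $F_a$ points toward $F_{a+1}$ while the coorientation on $F_{a+2}$ also points toward $F_{a+1}$, then $I_p$ and $I_q$ both terminate in $F_{a+1}$ and may well hit the same point. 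This is precisely the configuration the paper isolates at the start of its proof: if $\opp(p_1) = \opp(p_2)$ with $p_1 \neq p_2$, then $\opp(p_1)$ lies in the unique face of the family \emph{separating} the faces of $p_1$ and $p_2$, so $I_{p_1}$ and $I_{p_2}$ approach it from opposite sides. A local case analysis cannot rule this out.

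What the paper does instead is global. It compares, on each cell of $\opp(X)$, the coorientation induced by the $I$-fibers with the coorientation inherited from $F$, shows that agreement propagates across adjacent cells, and invokes connectivity of $X$ minus its vertices to conclude the two coorientations agree everywhere on $\opp(X)$. The hypothetical pair $p_1, p_2$ with $\opp(p_1)=\opp(p_2)$ would force the two coorientations to disagree on the shared target cell, giving the contradiction. So the missing ingredient in your argument is exactly this global consistency of coorientation, which requires using that $X$ is connected; the claim is not a cell-by-cell fact. Your ``different family'' and ``different cell'' subcases are essentially vacuous for the same reason: $\opp(p)=\opp(q)$ forces $p$ and $q$ to lie on the two faces of the same parallel family adjacent to the face containing the common image, so all the action is in the subcase you dismissed too quickly.
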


\begin{proof}
For a contradiction, assume there exist distinct points $p_{1},p_{2} \in X$ 
so that $\opp(p_{1}) = \opp(p_{2})$.
By construction, $p_{1}$ and $p_{2}$ are contained in parallel faces of $F$,
and moreover, $\opp(p_{1})$ is contained in the unique face that separates the face containing $p_{1}$ 
from that containing $p_{2}$ within the parallel family.  
We see that $I_{p_{1}}$ and $I_{p_{2}}$ point to the face containing
$\opp(p_{1})$ from opposite directions.

The intersection of $X$ with the faces of $F$ induces cell decompositions on $X$.
Note that for each cell $S$ of $X$, we have that $\opp(S)$ is a cell of $\opp(X)$.
For $q \in S$, $I_{q}$ induces a coorientation on $\opp(S)$
(clearly, this is independent of $q \in S$); 
we may assume for convenience that it coincides with the coorientation
induced  by $F$ (the other case is similar).
By construction, if $S'$ is a face of $X$ that intersects $S$ in an edge, then the
coorientation induced on $\opp(S')$ by $I_{q'}$ (for $q' \in S'$)
agrees with that induced on $\opp(S')$ 
by $F$.  Connectivity of $X \setminus V$
(where here $V$ denotes the vertices on $X$) implies that 
for all $p \in X$, the coorientation
induced by $I_{p}$ on the face of $\opp(X)$ containing $\opp(p)$
agrees with that induced by $F$.

Now let $S_{p_{1}}$ and $S_{p_{2}}$ be the faces of $X$ containing the points $p_1,p_2$ above.
By the discussion above, $\opp(S_{p_{1}}) = \opp(S_{p_{2}})$, and the coorientation
induced on $\opp(S_{p_{1}})$ by $I_{p_{1}}$ is opposite that induced by $I_{p_{2}}$.
This contradiction completes the proof of the claim.
\end{proof}

We see that $\opp(X)$ is a pair of pants.  Since $X \subset \mbox{int}(B)$,
every point of $X$ is colored only blue.  Thus every point of $\opp(X)$ is colored 
yellow or red (possibly both).  We conclude that $X \cap \opp(X) = \emptyset$.
It now follows that 
$$\cup_{p \in X} I_{p}$$
is a trivial $I$-bundle over $X$ embedded in $M$, which we will denote by $X \times I$.

We consider the image of $X \times I$ in $M$ cut open along $F$, still denoted $X \times I$.
Denote the components of $\del X$ as $\alpha_{1}$, $\alpha_{2}$, and $\alpha_{3}$. 
Since $X \times I$ is a trivial $I$-bundle, the induced
$I$-bundle over $\alpha_{i}$ is an annulus, say $A_{i}$.  If $A_{i}$ is 
boundary parallel in $M$ cut open along $F$, then
the annulus it cobounds with the boundary does not contain $X$ (since $\del X$ is essential in $F$); 
it is easy to see
that if all three annuli $A_{1}$, $A_{2}$, and $A_{3}$ are boundary parallel then $F$ 
is the union of $X$, $\opp(X)$,
and three annuli; thus $g(F) = 2$, contradicting our assumption
that $g(F) \geq \bddM \geq 38$.  We conclude that at least one of the annuli, say $A_{1}$, 
is not boundary parallel.  Since one boundary component of $A_1$, namely
$\alpha_1$, is essential in $F$, $A_1$ is incompressible.  If $A_1$ boundary
compresses then it is obtained by banding a boundary parallel disk to
itself; it is easy to see that $A_1$ is boundary parallel or compressible in that
case.  We conclude that $A_1$ is essential, completing the proof of Theorem~\ref{thm:easy}.

\section*{Acknowledgment}
The authors would like to thank the anonimous referee for his/her valuable suggestions. 
They also thank Michael Harris for his careful reading of earlier drafts.

\bibliographystyle{amsplain}

\bibliography{biblio} 

\providecommand{\bysame}{\leavevmode\hbox to3em{\hrulefill}\thinspace}
\providecommand{\MR}{\relax\ifhmode\unskip\space\fi MR }
\providecommand{\MRhref}[2]{%
  \href{http://www.ams.org/mathscinet-getitem?mr=#1}{#2}
}
\providecommand{\href}[2]{#2}
\begin{thebibliography}{10}

\bibitem{agol}
Ian Agol, \emph{The virtual haken conjecture}, Doc. Math. (2013), no.~18,
  1045--1087, with appendix by Ian Agol, Daniel Groves, Jason Manning.

\bibitem{BS}
David Bachman and Saul Schleimer, \emph{Surface bundles versus heegaard
  splittings}, Comm. Anal. Geom. \textbf{13} (2005), no.~5, 903--928.

\bibitem{EC}
Mario Eudave-Mu{\~n}oz and Max Neumann-Coto, \emph{Acylindrical surfaces in
  3-manifolds and knot complements}, Bol. Soc. Mat. Mexicana (3) \textbf{10}
  (2004), no.~Special Issue, 147--169. \MR{2199345 (2007f:57040)}

\bibitem{hass}
Joel Hass, \emph{Acylindrical surfaces in {$3$}-manifolds}, Michigan Math. J.
  \textbf{42} (1995), no.~2, 357--365. \MR{1342495 (96c:57031)}

\bibitem{MR744850}
William Jaco and Ulrich Oertel, \emph{An algorithm to decide if a
  {$3$}-manifold is a {H}aken manifold}, Topology \textbf{23} (1984), no.~2,
  195--209. \MR{744850 (85j:57014)}

\bibitem{JT}
Tsuyoshi Kobayashi and Yo'av Rieck, \emph{A linear bound on the tetrahedral
  number of manifolds of bounded volume (after {J}\o rgensen and {T}hurston)},
  Topology and geometry in dimension three, Contemp. Math., vol. 560, Amer.
  Math. Soc., Providence, RI, 2011, pp.~27--42. \MR{2866921}

\bibitem{KobayashiRieck}
Tsuyoshi Kobayashi and Yo'av Rieck, \emph{{H}eegaard distance of hyperbolic
  manifolds}, To appear in "{C}ommunications in {A}nalysis and {G}eometry,
  2012.

\bibitem{schleimer}
Saul Schleimer, \emph{Strongly irreducible surface automorphisms.}, Proc.
  Sympos. Pure Math. \textbf{71} (2003), 287--296.

\bibitem{sela}
Z.~Sela, \emph{Acylindrical accessibility for groups}, Invent. Math.
  \textbf{129} (1997), no.~3, 527--565. \MR{1465334 (98m:20045)}

\bibitem{WiseHierarchy}
Daniel~T. Wise, \emph{The structure of groups with a quasiconvex hierarchy},
  Available at http://www.math.mcgill.ca/wise/papers.html.

\bibitem{wise}
Daniel~T. Wise, \emph{From riches to raags: 3-manifolds, right-angled {A}rtin
  groups, and cubical geometry}, CBMS Regional Conference Series in
  Mathematics, vol. 117, Published for the Conference Board of the Mathematical
  Sciences, Washington, DC, 2012. \MR{2986461}

\end{thebibliography}

\end{document}